\theoremstyle{definition}
\newtheorem{definition}{Definition}[section]
\theoremstyle{plain}
\newtheorem{lemma}[definition]{Lemma}
\newtheorem{theorem}[definition]{Theorem}
\newtheorem{proposition}[definition]{Proposition}
\theoremstyle{remark}
\newcommand{\mycl}{\operatorname{cl}}
\begin{document}
\title[Morse theory in definably complete d-minimal structures]{Morse theory in definably complete d-minimal structures}
\author[M. Fujita]{Masato Fujita}
\address{Department of Liberal Arts,
Japan Coast Guard Academy,
5-1 Wakaba-cho, Kure, Hiroshima 737-8512, Japan}
\email{fujita.masato.p34@kyoto-u.jp}
\author[T. Kawakami]{Tomohiro Kawakami}
\address{Department of Mathematics,
	Wakayama University,
	Wakayama, 640-8510, Japan}
\email{kawa0726@gmail.com}

\begin{abstract}
Consider a  definable complete d-minimal expansion $(F, <, +, \cdot, 0, 1, \dots,)$ of an oredered field $F$. 
Let $X$ be a definably compact definably normal definable $C^r$ manifold and $2 \le r <\infty$.
We prove that the set of definable Morse functions is open and dense 
in the set of definable $C^r$ functions on $X$
with respect to the definable $C^2$ topology.
\end{abstract}

\subjclass[2020]{Primary 03C64; Secondary 14P10, 14P20, 57R35, 58A05}

\keywords{Definably complete, d-minimal structures, Morse theory,
definable $C^r$ functions, critical points, critical values, 
definably compact}

\maketitle

\section{Introduction}\label{sec:intro}

Let $\mathcal M=(F, +, \cdot, <, \dots)$ be a definably complete d-minimal expansion of 
an ordered field $F$.
Everything is considered in $\mathcal M$,
the term ``definable" is used in the sense of ``definable with
parameters in $\mathcal M$",
each definable map is assumed to be continuous and $2 \le r<\infty$.


Definable $C^r$ Morse functions in an o-minimal expansion of 
the standard structure of a real closed field are considered in \cite{PS}, \cite{K}.

In this paper we consider a definable $C^r$ version of Morse theory in definably complete d-minimal structures
when $2 \le r<\infty$.

Definable $C^r$ manifolds are studied in \cite{PS}, \cite{BO} \cite{K}.

Theorem 1.1 (Theorem 2.14) and 1.3  are our main results.

\begin{theorem}\label{thm:Morse}
	Consider a d-minimal expansion of an ordered field $\mathcal F=(F,<,+,\cdot, 0,1,\ldots)$.
	Let $M$ be a definably compact $\mathcal D^2$ submanifold of $F^n$.
	The set of all definable Morse functions on $M$ is open and dense in the set $\mathcal D^2(M)$ of $\mathcal{D}^2$.functions of $M$
\end{theorem}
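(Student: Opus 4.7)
The plan is to prove density and openness separately, following the classical Morse-theoretic perturbation strategy adapted to the d-minimal setting. For density, given $f \in \mathcal{D}^2(M)$, I would perturb by linear forms on the ambient space: for $a \in F^n$, set $f_a(x) := f(x) - \langle a, x\rangle$. A point $x \in M$ is critical for $f_a$ precisely when the restriction $\langle a, \cdot\rangle|_{T_xM}$ equals $df(x)$, and such a critical point is non-degenerate precisely when $a$ is a regular value of the definable $\mathcal{D}^1$ map $\Phi : M \to F^n$ that sends each $x$ to $df(x)$, viewed as a covector on $F^n$ via orthogonal extension through $T_xM$. Invoking a definable Sard-type theorem valid in this framework, the set of critical values of $\Phi$ has dimension strictly less than $n$, hence empty interior; choosing $a$ arbitrarily small outside this set yields an $f_a$ that is Morse and arbitrarily $\mathcal{D}^2$-close to $f$.

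For openness, let $f \in \mathcal{D}^2(M)$ be Morse. Since $M$ is definably compact and the critical set of a Morse function is discrete, $\mathrm{Crit}(f) = \{p_1,\ldots,p_k\}$ is finite with $\mathrm{Hess}_{p_i} f$ invertible for each $i$. Choose definable chart neighborhoods $U_i \ni p_i$ with pairwise disjoint closures, small enough that the Hessian remains invertible on each $\overline{U_i}$. On $M \setminus \bigcup U_i$, definable compactness yields $\|df\| \ge \delta > 0$, so any $g$ with $\|g - f\|$ sufficiently small in the $\mathcal{D}^2$ topology has no critical point there. Inside each $U_i$, the definable implicit function theorem applied to $dg = 0$ produces a unique critical point $q_i(g)$ near $p_i$, at which the Hessian remains invertible by continuity in the two-jet. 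Hence $g$ is Morse.

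The principal obstacle lies in the density step: one needs a Sard-type statement in the d-minimal setting ensuring that the image, under a $\mathcal{D}^1$ definable map, of the locus where its rank drops below maximal has dimension strictly less than the target. This should follow from cell decomposition and dimension theory for definably complete d-minimal structures, but the verification requires care since d-minimal structures are strictly more permissive than o-minimal ones, and differentiability of definable maps is only guaranteed off a lower-dimensional exceptional set. A secondary technical matter is setting up $\Phi$ globally with enough definable regularity; this can be sidestepped by carrying out the perturbation chart by chart on a finite definable cover, replacing each global $\langle a, \cdot\rangle$ with the product of a linear form and a locally supported definable bump function, and then patching finitely many successful perturbations.
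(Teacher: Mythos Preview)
Your overall strategy (linear perturbation plus a definable Sard theorem for density; a compactness argument for openness) matches the paper's, but there is a genuine gap in your primary density argument, and your openness argument diverges from the paper's in a way that introduces an extra, nontrivial obligation.

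\medskip
\textbf{Density.} Your global map $\Phi:M\to F^n$, $x\mapsto df(x)$ extended by zero on $N_xM$, does not do what you claim. A point $x$ is critical for $f_a$ exactly when $a-\Phi(x)\in N_xM$, not when $a=\Phi(x)$; so ``$a$ is a regular value of $\Phi$'' carries no information about the critical points of $f_a$. Since $\dim M=d<n$, every value of $\Phi$ is critical and Sard only tells you $\Phi(M)$ has empty interior, which is irrelevant. The correct global object is the map from the normal bundle $NM\to F^n$, $(x,\nu)\mapsto \Phi(x)+\nu$; its regular values are precisely the $a$ for which $f_a$ is Morse, and Sard applies because $\dim NM=n$. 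Alternatively, and this is what the paper actually does, one works chart by chart: cover $M$ by finitely many multi-valued graphs $U_i$ over coordinate $d$-planes, take a shrinking $C_i=\mycl(V_i)\subseteq U_i$ with $M=\bigcup C_i$, and at stage $i$ perturb by $\lambda_i\cdot\sum_{j=1}^d a_jx_j$ where $\lambda_i$ is a $\mathcal D^2$ bump supported in $U_i$ and equal to $1$ on $C_i$. Sard is applied to the $\mathcal D^1$ map $H:U_i\to F^d$ given by the $d$ first-order partials, and openness of the already-established Morse property on $C_1,\dots,C_{i-1}$ guarantees the earlier gains survive each new perturbation. You describe exactly this fallback in your last paragraph; that is the argument you should lead with.

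\medskip
\textbf{Openness.} The paper does not isolate the critical set at all. On each $C_i$ it forms the continuous definable function $G_i=\sum_{j=1}^d|\partial_{\sigma_i(j)}h|+|\det(\partial_{\sigma_i(j_1)}\partial_{\sigma_i(j_2)}h)|$, notes $G_i>0$ because $h$ has no degenerate critical points there, gets a uniform lower bound $K_i>0$ by definable compactness, and then checks by elementary estimates that any $g$ in a small $\mathcal D^2$-ball keeps the corresponding quantity positive on $C_i$. Your route via finiteness of $\mathrm{Crit}(f)$ and the implicit function theorem is the classical one, but in the d-minimal setting you owe a proof that a definably compact, discrete, definable set is finite; d-minimality expressly allows infinite discrete definable sets, so this is not free, and you have not addressed it. The paper's argument sidesteps this entirely and is both shorter and more robust.
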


Theorem 1.1 is a generalization of \cite{K}.

\begin{theorem}[2.5 \cite{FK2}]
Consider a d-minimal expansion of an ordered field $\mathcal F=(F,<,+,\cdot, 0,1,\ldots)$.
	Let $\mathcal F=(F,<,+,\cdot,0,1,\ldots)$ be a definably complete d-minimal expansion of an ordered field.
	Every definably normal $\mathcal D^r$ manifold is definably imbeddable into some $F^n$, and its image is a $\mathcal D^r$  submanifold of $F^n$.
\end{theorem}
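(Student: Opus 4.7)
\medskip

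\noindent\textbf{Proof proposal.}
The plan is to mimic the classical Whitney-style embedding argument, adapted to the definable category. Because $M$ is a definable manifold, it carries a \emph{finite} atlas $\{(U_i,\varphi_i)\}_{i=1}^k$ with $\varphi_i\colon U_i\to F^m$ definable $\mathcal{D}^r$-charts, so the job is to glue the local coordinates into one global map $F\colon M\to F^N$. Definable normality will be used exactly to perform the standard two-step shrinking and to build a definable $\mathcal{D}^r$ partition of unity; the role of d-minimality is to guarantee the existence of sufficiently smooth definable bump functions supported in prescribed open sets.

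First I would apply definable normality twice to obtain definable open refinements
\[
  W_i\subset\mycl(W_i)\subset V_i\subset\mycl(V_i)\subset U_i, \qquad i=1,\dots,k,
\]
with $\{W_i\}$ still covering $M$. Using d-minimality (more specifically, the definable $\mathcal{D}^r$ Urysohn-type statement that has been developed in the d-minimal setting), I would construct definable $\mathcal{D}^r$ functions $\lambda_i\colon M\to F$ with $0\le\lambda_i\le 1$, $\lambda_i\equiv 1$ on $\mycl(W_i)$ and $\mysupp(\lambda_i)\subset V_i$. Define $f_i\colon M\to F^m$ by $f_i=\lambda_i\cdot\varphi_i$ on $U_i$ and $f_i\equiv 0$ on $M\setminus\mysupp(\lambda_i)$; this is well defined and $\mathcal{D}^r$ on all of $M$. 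Then set
\[
  F:=(f_1,\dots,f_k,\lambda_1,\dots,\lambda_k)\colon M\longrightarrow F^{k(m+1)}.
\]

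Now I would verify the three embedding properties. Injectivity: if $F(x)=F(y)$, pick $i$ with $x\in W_i$; then $\lambda_i(x)=1=\lambda_i(y)$ forces $y\in V_i\subset U_i$, and $f_i(x)=f_i(y)$ together with $\lambda_i\equiv 1$ on a neighborhood of $x$ yields $\varphi_i(x)=\varphi_i(y)$, hence $x=y$. Immersion: on $W_i$ the component $f_i$ agrees with $\varphi_i$, so the derivative of $F$ has rank $m$ at every point. Homeomorphism onto image: this is the delicate point for non-compact $M$, and is where definable normality again enters; one proves that $F^{-1}$ is continuous by working chart by chart, using that the $\lambda_i$-coordinates detect which $V_i$ contains a given point and then inverting $\varphi_i$ on the piece where $\lambda_i$ is bounded below. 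Finally, the inverse function theorem in the definable $\mathcal{D}^r$ category (which holds in a definably complete d-minimal expansion of a field) shows that $F(M)$ is locally the graph of a $\mathcal{D}^r$ map, so $F(M)$ is a $\mathcal{D}^r$ submanifold of $F^{k(m+1)}$.

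The main obstacle I expect is \emph{not} the combinatorial bookkeeping but the analytic input: producing the definable $\mathcal{D}^r$ bump functions $\lambda_i$ with the prescribed support and value-$1$ set. In the classical $C^\infty$ setting this is immediate, but in a d-minimal expansion of an ordered field one must appeal to the existing (and nontrivial) definable $\mathcal{D}^r$ Urysohn/Tietze machinery for definably normal definable spaces. Once those bump functions are available, everything else is a routine translation of the Whitney embedding proof into the definable framework, and the conclusion that $F(M)$ is a $\mathcal{D}^r$ submanifold follows from the local immersion picture together with the homeomorphism-onto-image verification above.
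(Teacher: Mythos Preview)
The paper does not supply a proof of this theorem: it is quoted verbatim as Theorem~2.5 of \cite{FK2} and used as a black box to pass from Theorem~1.1 (for definably compact $\mathcal D^2$ submanifolds of $F^n$) to Theorem~1.3 (for abstract definably compact definably normal $\mathcal D^r$ manifolds). So there is nothing in the present paper to compare your argument against.

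That said, your outline is the expected Whitney-type proof and is the natural approach in this setting. Two small points worth tightening before you would call it complete. First, in the paper's definition of a $\mathcal D^r$ manifold the charts $\varphi_i\colon U_i\to U_i'$ land in $\mathcal D^r$ submanifolds $U_i'\subseteq F^{m_i}$ rather than in open subsets of a fixed $F^m$; this does not affect your construction (you simply take $f_i$ with values in $F^{m_i}$ and concatenate), but your write-up silently assumes all $m_i$ are equal. Second, the bump functions you need are produced on an \emph{abstract} definably normal $\mathcal D^r$ manifold, whereas the Urysohn-type lemmas actually proved in this paper (Lemmas~2.9--2.11) are stated only for $\mathcal D^r$ \emph{submanifolds} of $F^m$; so, as you correctly flag, the genuine work is the $\mathcal D^r$ Urysohn lemma for definably normal $\mathcal D^r$ manifolds, and that input has to come from \cite{FK2} or the d-minimal zero-set machinery of \cite{MT} pulled back through the charts. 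Once that is in hand, the rest of your argument (injectivity via the $\lambda_i$-coordinates, immersion on each $W_i$, and the chart-by-chart continuity of $F^{-1}$) goes through as written.
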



%
%
%

Combining Theorem 1.2 and 1.3, we have the following theorem.


\begin{theorem}
Consider a d-minimal expansion of an ordered field $\mathcal F=(F,<,+,\cdot, 0,1,\ldots)$.
Let $X$ be a definably compact definably normal $\mathcal{D}^r$ manifold.
Then the set of definable Morse functions is open and dense in the set $D(X)$ of $\mathcal{D}^r$ functions on $X$
with respect to the definable $C^2$ topology.
\end{theorem}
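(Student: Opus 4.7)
The plan is to deduce the result from Theorem 1.1 via the embedding provided by Theorem 1.2. Applying Theorem 1.2 to $X$ yields a definable $\mathcal{D}^r$ imbedding $\varphi\colon X\to F^n$ whose image $M=\varphi(X)$ is a $\mathcal{D}^r$ submanifold of $F^n$. Since $X$ is definably compact and $\varphi$ is a continuous definable map, $M$ is definably compact as well, hence a definably compact $\mathcal{D}^r$ (a fortiori $\mathcal{D}^2$) submanifold of $F^n$ to which Theorem 1.1 applies. Pullback $g\mapsto\varphi^{\ast}g:=g\circ\varphi$ is a bijection $\mathcal{D}^r(M)\to\mathcal{D}^r(X)$ that is a homeomorphism for the definable $C^2$ topologies, because $\varphi$ is a $\mathcal{D}^r$ diffeomorphism onto $M$ and the $C^2$ topology is defined in terms of chart expressions; the same observation shows that $\varphi^{\ast}$ identifies the respective sets of definable Morse functions, since being Morse at a critical point is a local $C^2$ condition invariant under $\mathcal{D}^r$ changes of coordinates with $r\ge 2$.

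By Theorem 1.1, the set of definable Morse functions is open and dense in $\mathcal{D}^2(M)$ for the definable $C^2$ topology. Openness transfers to $\mathcal{D}^r(M)$ at once: the Morse functions in $\mathcal{D}^r(M)$ form the intersection of $\mathcal{D}^r(M)$ with an open subset of $\mathcal{D}^2(M)$, and the $C^2$ topology on $\mathcal{D}^r(M)$ is by definition the one inherited from $\mathcal{D}^2(M)$. The delicate point is density: a priori Theorem 1.1 only produces a $\mathcal{D}^2$ Morse function close to a given $f\in\mathcal{D}^r(M)$, whereas what is required is a Morse function that itself lies in $\mathcal{D}^r(M)$.

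The main obstacle is therefore the regularity of the approximating function in the density statement. I would resolve it by inspecting the proof of Theorem 1.1: the standard Sard-type argument produces Morse perturbations of the form $f+\ell|_M$, where $\ell(x)=\sum_{i=1}^{n}a_i x_i$ is a linear form on $F^n$ with $(a_1,\dots,a_n)$ chosen generically inside a small ball. Because $\ell$ is polynomial, $f+\ell\in\mathcal{D}^r(M)$ whenever $f\in\mathcal{D}^r(M)$, and by shrinking $\|a\|$ the perturbation is placed in any prescribed $C^2$-neighborhood of $f$. Granting that Theorem 1.1 is proved along these lines, density inside $\mathcal{D}^r(M)$ follows. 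Pulling the whole statement back along $\varphi^{\ast}$ then transports openness and density of the definable Morse functions to $D(X)=\mathcal{D}^r(X)$, completing the proof.
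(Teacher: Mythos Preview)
Your approach matches the paper's: the paper simply states that Theorem~1.3 follows by combining Theorems~1.1 and~1.2, and you have supplied the details of that reduction, including the pullback argument and the openness transfer to $\mathcal{D}^r(M)$. One small correction regarding the density step: the paper's proof of Theorem~1.1 does not use a single global linear perturbation $f+\ell|_M$ but rather patches chart-wise linear perturbations together via bump functions, producing approximants of the form $g_{i-1}+\lambda_i\cdot\bigl(\sum_j a_j x_j\bigr)$. Your conclusion that the perturbations preserve $\mathcal{D}^r$ regularity is nevertheless correct, because the bump functions $\lambda_i$ are obtained from Lemma~\ref{lem:sep}, which yields $\mathcal{D}^r$ functions when $M$ is a $\mathcal{D}^r$ submanifold.
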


\section{Preliminary and Proof of Theorem 1.1}\label{sec:preliminary}
Recall the definitions of d-minimality and definably completeness.

\begin{definition}
An expansion of a dense linear order without endpoints $\mathcal F=(F,<,\ldots)$ is \textit{d-minimal} 
if for every $m$ and definable subset $A$ of $F^{m+1}$,
there exists an $N \in \mathbb{N}$ such that for every $x \in F^m$
the set $\{y \in F|(x, y) \in A\}$ has non-empty interior or a union of at most $N$ discrete sets (\cite{F}, \cite{MT}).

%
The expansion $\mathcal F$ is \textit{definably complete} if any definable subset $X$ of $F$ has the supremum and  infimum in $F \cup \{\pm \infty\}$.
(\cite{M}).

\end{definition}

The definition of dimension is found in \cite[Definition 3.1]{Fuji4}.

\begin{definition}[Dimension]\label{def:dim}
Consider an expansion of a densely linearly order without endpoints $\mathcal F=(F,<,\ldots)$.
Let $X$ be a nonempty definable subset of $F^n$.
The dimension of $X$ is the maximal nonnegative integer $d$ such that $\pi(X)$ has a nonempty interior for some coordinate projection $\pi:F^n \rightarrow F^d$.
We consider that $F^0$ is a singleton with the trivial topology.
We set $\dim(X)=-\infty$ when $X$ is an empty set.
\end{definition}

\begin{definition}[\cite{FK}]
	Consider a definably complete expansion $\mathcal F=(F,<,+, \cdot, 0,1,\ldots)$ of an ordered field.
	Suppose that $1 \le r < \infty$.
	
	(1)
	A pair $(M, \{\varphi_i:U_i \rightarrow U'_i\}_{i \in I})$ of a topological space and a finite family of homeomorphisms is a \textit{definable $\mathcal C^r$ manifold} or a \textit{$\mathcal D^r$ manifold} if  
	\begin{itemize}
		\item $\{U_i\}_{i \in I}$ is a finite open cover of $M$, 
		\item $U'_i$ is a $\mathcal D^r$ submanifold of $F^{m_i}$ for any $i \in I$ and,
		\item the composition $(\varphi_j|_{U_i \cap U_j}) \circ (\varphi_i|_{U_i \cap U_j})^{-1}:\varphi_i(U_i \cap U_j) \rightarrow \varphi_j(U_i \cap U_j)$ is a $\mathcal D^r$ diffeomorphism whenever $U_i \cap U_j \neq \emptyset$.
	\end{itemize}
	Here, the notation $\varphi_i|_{U_i \cap U_j}$ denotes the restriction of $\varphi_i$ to ${U_i \cap U_j}$.
	The family $\{\varphi_i:U_i \rightarrow U_i'\}_{i \in I}$ is called a \textit{ $\mathcal D^r$ atlas} on $M$.
	We often write $M$ instead of $(M, \{\varphi_i:U_i \rightarrow U'_i\}_{i \in I})$ for short.
	Note that a $\mathcal D^r$ submanifold is naturally a $\mathcal D^r$ manifold.

	In the o-minimal setting, a $\mathcal D^r$ manifold is defined as the object obtained by pasting finitely many definable open sets.
	$\mathcal D^r$ submanifolds are pasted in our definition. 
	If we adopt the same definition of $\mathcal D^r$ manifolds as in the o-minimal setting, $\mathcal D^r$ manifolds of dimension zero is a finite set because $F^0$ is a singleton.
	A $\mathcal D^r$ submanifold of dimension zero is not necessarily a $\mathcal D^r$ manifold in this definition.
	It seems to be strange, so we employed our definition of $\mathcal D^r$ manifolds.
	
	Consider a $\mathcal D^r$ manifold $M$, two $\mathcal D^r$ atlases $\{\varphi_i:U_i \rightarrow U_i'\}_{i \in I}$ and $\{\psi_j:V_j \rightarrow V_j'\}_{j \in J}$ on $M$.
	They are \textit{equivalent} if, for all $i \in I$ and $j \in J$,
	\begin{itemize}
		\item the images $\varphi_i(U_i \cap V_j)$ and $\psi_j(U_i \cap V_j)$ are definable  open subsets of $U'_i$ and $V'_j$, respectively, and
		\item the $\mathcal D^r$ diffeomorphism $(\psi_j|_{U_i \cap V_j}) \circ (\varphi_j|_{U_i \cap V_j})^{-1}:\varphi_i(U_i \cap V_j) \rightarrow \psi_j(U_i \cap V_j)$ are definable whenever $U_i \cap U_j \neq \emptyset$.
	\end{itemize}
	The above relation is obviously an equivalence relation.
	
	A subset $X$ of the $\mathcal D^r$ manifold $M$ is \textit{definable} if $\varphi_i(X \cap U_i)$ are definable for all 
	$i \in I$. 
	When two atlases $\{\varphi_i:U_i \rightarrow U_i'\}_{i \in I}$ and $\{\psi_j:V_j \rightarrow V_j'\}_{j \in J}$ of a $\mathcal D^r$ manifold $M$ is equivalent, it is obvious that a subset of the $\mathcal D^r$ manifold $(S,\{\varphi_i\}_{i \in I})$ is definable if and only if it is definable as a subset of the $\mathcal D^r$ manifold $(M, \{\psi_j\}_{j \in J})$.
	
	The Cartesian product of two $\mathcal D^r$ manifold is naturally defined.
	A map $f:S \rightarrow T$ between $\mathcal D^r$ manifolds is \textit{definable} if its graph is definable in $S \times T$.

	(2)
	A definable subset $Z$ of $X$ is called 
	a \textit{$k$-dimensional $\mathcal D^r$ submanifold}
	of $X$ if each point $x \in Z$ there exist an open box $U_x$ of $x$ in $X$
	and a $\mathcal D^r$ diffeomorphism $\phi_x$ from $U_x$ to some open box $V_x$ of
	$F^d$ such that $\phi_x(x)= 0$ and $U_x \cap Y = \phi_x^{-1}(F^k \cap V_x)$. 
	
	(3) Let $X$ and $Y$ be $\mathcal D^r$ manifolds with $\mathcal D^r$ charts
	$\{\phi_i:U_i \to V_i\}_{i \in A}$ and $\{\psi_j:U'_j \to  V'_j\}_{j \in B}$, respectively. 
	A continuous map $f :X \to Y$ is a \textit{definable $C^r$ map} or a \textit{$\mathcal D^r$ map}
	if for any $i \in A$ and $j \in B$, the image $\phi_i(f^{-1}(V_j') \cap U_i)$ is definable and open in $F^n$ and 
	the map $\psi_j \circ f \circ \phi_i^{-1}:\phi_i(f^{-1}(V_j) \cap U_i) \to F^m$
	is a $\mathcal D^r$ map.
	
	(4) Let $X$ and $Y$ be $\mathcal D^r$ manifolds.
	We say that $X$ is \textit{definably $C^r$ diffeomorphic to} $Y$ or $\mathcal D^r$ diffeomorphic to $Y$
	if there exist $\mathcal D^r$ maps
	$f:X \to Y$ and $h: Y \to X$ such that $f \circ h=\operatorname{id}$ and $h \circ f = \operatorname{id}$.
	
	(5) A $\mathcal D^r$ manifold is called \textit{definably normal} if for any definable closed subset $C$ and any definably open subset $U$ of $M$ with $C \subseteq U$, there exists a definable open subset $V$ of $M$ such that $C \subseteq V \subseteq \mycl_M(V) \subseteq U$,
	where $\mycl_M(V)$ denotes the closure of $V$ in $M$.
\end{definition}

\begin{definition}
For a set $X$, a family $\mathcal K$ of subsets of $X$ is called a \textit{filtered collection} if, for any $B_1, B_2 \in \mathcal K$, 
there exists $B_3 \in \mathcal K$ with $B_3 \subseteq B_1 \cap B_2$. 

Consider an expansion of a dense linear order without endpoints $\mathcal F=(F;<,\ldots)$.
Let $X$ and $T$ be $\mathcal{D}^r$ manifolds.
The parameterized family $\{S_t\}_{t \in T}$ of definable subsets of $X$ is called \textit{definable} if the union $\bigcup_{t \in T} \{t\} \times S_t$ is definable in $T \times X$.

A parameterized family $\{S_t\}_{t \in T}$ of definable subsets of $X$ is a \textit{definable filtered collection} if it is simultaneously definable and a filtered collection.

A definable space $X$ is \textit{definably compact} if every definable filtered collection of closed nonempty subsets of $X$ has a nonempty intersection.
This definition is found in \cite[Section 8.4]{J}.
\end{definition}


%
%
%
%

\begin{definition}
	Consider a definably complete expansion of an ordered field whose universe is $F$.
	Let $\pi:F^n \to F^d$ be a coordinate projection.
	A $\mathcal D^r$ submanifold $M$ of $F^n$ of dimension $d$ is called a \textit{$\mathcal D^r$ multi-valued graph (with respect to $\pi$)} if, for any $x \in M$, there exist an open box $U$ in $F^n$ containing the point $x$ and a $\mathcal D^r$ map $\tau:\pi(U) \to F^n$ such that $M \cap U=\tau(\pi(U))$ and $\pi \circ \tau$ is the identity map defined on $\pi(U)$. 
\end{definition}

\begin{lemma}
	Consider a definably complete expansion of an ordered field whose universe is $F$.
	Let $M$ be a $\mathcal D^r$ submanifold $M$ of dimension $d$.
	Let $\Pi_{n,d}$ be the set of coordinate projections from $F^n$ onto $F^d$.
	Let $U_{\pi}$ be the set of points $x$ at which there exist an open box $U$ in $F^n$ containing the point $x$ and a $\mathcal D^r$ map $\tau:\pi(U) \to F^n$ such that $M \cap U=\tau(\pi(U))$ and $\pi \circ \tau$ is the identity map defined on $\pi(U)$. 
	Then $U_{\pi}$ is a $\mathcal D^r$ multi-valued graph with respect to $\pi$ and 
	$\{U_{\pi}\}_{\pi \in \Pi_{n,d}}$ is a definable open cover of $M$.
\end{lemma}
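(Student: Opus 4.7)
The plan is to verify three things in turn — definability of $U_\pi$, openness together with the multi-valued graph structure, and the covering property — by exploiting the locality of the defining condition for the first two, and using the inverse function theorem together with a carefully chosen chart for the third.

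For definability and openness, note that membership of $x$ in $U_\pi$ is witnessed by an open box $U$ and a $\mathcal D^r$ map $\tau$; the condition that $M \cap U$ be the graph of a $\mathcal D^r$ function over $\pi(U)$ is expressible by a first-order formula (unique preimage for each point in $\pi(U)$, plus the usual $\varepsilon$-$\delta$ conditions for existence and continuity of partial derivatives up to order $r$ of the resulting automatically-definable graph function), so $U_\pi$ is definable. Moreover, if $(U, \tau)$ witnesses $x \in U_\pi$, then the same pair witnesses every $y \in M \cap U$ as an element of $U_\pi$. This simultaneously shows $M \cap U \subseteq U_\pi$ — so $U_\pi$ is open in $M$ — and provides witnesses at every point of $U_\pi$, verifying the multi-valued graph property.

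For the covering property, fix $x \in M$ and take a $\mathcal D^r$ chart $\phi_x : W \to V$ of $M$ at $x$ with $\phi_x(x) = 0$ and $\phi_x(W \cap M) = V \cap F^d$; set $\psi = \phi_x^{-1}$. The $n \times d$ Jacobian of the parametrization $\psi|_{V \cap F^d}$ at $0$ has rank $d$, so some $d$ of its rows form an invertible $d \times d$ block, and the corresponding coordinate projection $\pi \in \Pi_{n,d}$ makes $\sigma := \pi \circ \psi|_{V \cap F^d}$ a $\mathcal D^r$ map with invertible derivative at $0$. The inverse function theorem for definably complete expansions of ordered fields yields an open box $A \ni 0$ on which $\sigma$ restricts to a $\mathcal D^r$ diffeomorphism onto $\sigma(A)$; setting $\tau = \psi|_A \circ (\sigma|_A)^{-1}$ produces a $\mathcal D^r$ section of $\pi$ with image $\psi(A)$, which is an open neighbourhood of $x$ in $M$.

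The remaining step, and the main obstacle, is to produce an open \emph{box} $U \ni x$ realizing the \emph{exact} equality $M \cap U = \tau(\pi(U))$: one needs not only that $\tau$ sections $\pi$ over $\pi(U)$ but also that $\tau(\pi(U)) \subseteq U$. After reordering coordinates so that $\pi$ drops the last $n-d$ coordinates, I would take $U$ in product form $A' \times U''$, where $A' \subseteq \sigma(A)$ is an open box in $F^d$ around $\pi(x)$ and $U'' \subseteq F^{n-d}$ is an open box around the last $n-d$ coordinates of $x$, both shrunk so that the last $n-d$ components of $\tau$ send $A'$ into $U''$ and so that $U \subseteq W$ with $\psi^{-1}(U) \subseteq A$. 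With these choices, $M \cap U$ coincides with $\psi(A) \cap U$; the inclusion $M \cap U \subseteq \tau(\pi(U))$ uses $y = \tau(\pi(y))$ for $y \in \psi(A)$, while $\tau(\pi(U)) \subseteq M \cap U$ is forced by the choice of $U''$. This exhibits $x \in U_\pi$, completing the cover.
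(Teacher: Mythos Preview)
Your argument is correct. For definability, openness, and the multi-valued graph property you spell out precisely the ``obvious'' verification the paper compresses into one line. For the covering property the paper simply cites \cite[Lemma~3.5, Corollary~3.8]{Fuji}, whereas you give a self-contained proof from the submanifold chart and the definable inverse function theorem; your route is more explicit and removes the external dependency, at the cost of the box-shrinking bookkeeping in the final paragraph.

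One point to tighten: the condition ``$\psi^{-1}(U)\subseteq A$'' is ambiguous as written, since $\psi^{-1}=\phi_x$ sends the full box $U\subseteq W$ into $V\subseteq F^n$, not into $A\subseteq F^d$ (forcing that literally would give $U\subseteq M$, impossible for $n>d$). What you need, and presumably intend with $\psi$ read as $\psi|_{V\cap F^d}$, is $\phi_x(M\cap U)\subseteq A$. This is achievable: $\phi_x|_{M\cap W}$ is continuous into $V\cap F^d$ and $A$ is a neighbourhood of $0$ there, so one can first pick an open $O\subseteq F^n$ with $M\cap O=\psi(A)$ and then take the product box $U=A'\times U''$ inside $O\cap W$. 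With that reading the chain $M\cap U=\psi(A)\cap U=\tau(A')$ goes through exactly as you claim.
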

\begin{proof}
	It is obvious that $U_{\pi}$ is a $\mathcal D^r$ multi-valued graph with respect to $\pi$.
	It is also obvious that $U_{\pi}$ is open in $M$.
	The family $\{U_{\pi}\}_{\pi \in \Pi_{n,d}}$ is a definable open cover of $M$ by \cite[Lemma 3.5, Corollary 3.8]{Fuji}.
\end{proof}

\begin{proposition}[Definable Sard]\label{prop:definable_Sard}
	Consider a d-minimal expansion of an ordered field $\mathcal F=(F,<,+,\cdot, 0,1,\ldots)$.
	Let $M$ be a $\mathcal D^1$ submanifold of $F^m$ of dimension $d$ and $f=(f_1,\ldots, f_n):M \to F^n$ be a $\mathcal D^1$ map.
	The set of critical values of $f$ is definable and of dimension smaller than $n$. 
\end{proposition}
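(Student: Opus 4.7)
The plan is to reduce via the preceding multi-valued graph lemma to the case of a map on a Euclidean open set, and then apply a rank stratification to the critical set.

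The critical set $C\subseteq M$ is defined locally by the vanishing of all $n\times n$ minors of the Jacobian of $f$ in any $\mathcal D^1$ chart, hence is definable, and so is its image $f(C)$. Using the preceding lemma, cover $M$ by the finitely many multi-valued graphs $U_\pi$; on each $U_\pi$ the coordinate projection $\pi$ admits local $\mathcal D^1$ sections, so pulling back $f$ by these sections reduces the problem to the following statement: for a definable open $V\subseteq F^d$ and a $\mathcal D^1$ map $g:V\to F^n$, the image of the critical set $C_g=\{x\in V:\operatorname{rank}(Dg_x)<n\}$ has dimension strictly less than $n$.

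If $d<n$, then $C_g=V$ and $\dim g(V)\le d<n$ because definable maps do not raise dimension in d-minimal structures. Suppose $d\ge n$. Using the $\mathcal D^1$-submanifold decomposition available in the d-minimal setting, partition $C_g$ into finitely many definable $\mathcal D^1$ submanifolds $C_i\subseteq F^d$, and refine so that the rank of the restricted differential $x\mapsto Dg_x|_{T_xC_i}$ is a constant $r_i$ on each $C_i$. Since $T_xC_i\subseteq F^d$, one has $r_i\le\operatorname{rank}(Dg_x)<n$ for every $x\in C_i$. By the $\mathcal D^1$ constant rank theorem, every point of $C_i$ has a neighborhood in which $g|_{C_i}$ factors through a coordinate projection of rank $r_i$, so its image lies in an $r_i$-dimensional $\mathcal D^1$ submanifold of $F^n$; covering $C_i$ by finitely many such neighborhoods, via a further d-minimal decomposition, gives $\dim g(C_i)\le r_i<n$, and the finite union produces $\dim f(C)<n$.

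The step I expect to be delicate is the rank stratification together with the local constant rank reduction, since d-minimal structures admit only a weaker cell decomposition than o-minimal ones. This is handled by invoking the $\mathcal D^1$-submanifold decomposition of definable sets in the d-minimal setting and the standard $\mathcal D^1$ constant rank theorem, both of which are available in any definably complete expansion of an ordered field; once these tools are in place the remainder reduces to finite-union dimension bookkeeping as in the classical o-minimal definable Sard theorem.
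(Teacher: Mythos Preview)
Your argument is essentially correct but takes a genuinely different route from the paper. The paper does the same reduction to a multi-valued graph and handles $d<n$ the same way, but for $d\ge n$ it argues by contradiction rather than by stratification: assuming $\dim\Sigma_f=n$, it takes an open box $U\subseteq\Sigma_f$, chooses a definable section $g:U\to\Gamma_f$ with $f\circ g=\mathrm{id}_U$, invokes generic $\mathcal C^1$-smoothness of definable maps in d-minimal structures (Fornasiero, Lemma~3.14) to make $g$ of class $\mathcal C^1$ after shrinking $U$, and then differentiates to obtain $Df(g(x))\cdot Dg(x)=I_n$, forcing $\operatorname{rank}Df(g(x))\ge n$ and contradicting $g(x)\in\Gamma_f$. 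This needs only definable choice and generic $\mathcal C^1$-ness, not a $\mathcal D^1$ stratification or the constant rank theorem. Your approach is the classical o-minimal proof pattern and works here too, provided you cite the d-minimal $\mathcal D^1$-cell decomposition precisely; one small imprecision is the phrase ``covering $C_i$ by finitely many such neighborhoods''---the cleaner way to finish that step is to note that constant rank $r_i$ makes every fiber of $g|_{C_i}$ a submanifold of dimension $\dim C_i-r_i$ and then apply the d-minimal fiber-dimension formula to get $\dim g(C_i)=r_i<n$ directly. The trade-off: the paper's argument is shorter and uses lighter tools; yours gives more structural information about the critical set along the way.
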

\begin{proof}
	The proposition is trivial when $d<n$ by \cite[Lemma 4.5]{F}.
	We consider the case in which $d \geq n$ in the rest of proof. 
	
	We denote the set of critical values of $f$ by $\Sigma_f$.
	The $\mathcal D^1$ manifold $M$ is covered by finitely many $\mathcal D^1$ multi-valued graphs $U_1,\ldots, U_k$ by Lemma 2.6.
	The equality $\Sigma_f=\bigcup_{i=1}^k \Sigma_{f|_{U_i}}$ obviously holds, where $f|_{U_I}$ is the restriction of $f$ to $U_i$.
	The set $\Sigma_f$ is definable if $ \Sigma_{f|_{U_i}}$ is definable for every $1 \leq i\leq k$.
	We have $\dim \Sigma_f=\max \{\dim \Sigma_{f|_{U_i}}\;|\;1 \leq i \leq k\}$ by \cite[Lemma 4.5]{F}.
	Therefore, we may assume that $M$ is a $\mathcal D^1$ multi-valued graph with respect to a coordinate projection $\pi$.
	We may further assume that $\pi$ is the projection onto the first $d$ coordinates by permuting the coordinates if necessary.
	
	By the definition of $\mathcal D^1$ multi-valued graphs, for any $a \in M$, there exists a nonempty open box $B$ such that $M \cap B$ is the graph of $\mathcal D^1$ map defined on $\pi(B)$.
	In particular, the restriction of $\pi$ to $M \cap B$ is a $\mathcal D^1$ diffeomorphism onto $\pi(B)$.
	The global coordinate functions $x_1,\ldots, x_d$ are local coordinates of $M$ at $a$.
	Let $Df:M \to F^{d \times n}$ be the map giving the Jacobian matrix $(\frac{\partial f_i}{\partial x_j})_{1 \leq i \leq n, 1 \leq j \leq d}$.
	Set $\Gamma_f=\{x \in M\;|\; \operatorname{rank}(Df(x)) < n\}$. 
	The set $\Gamma_f$ is definable.
	The set $\Sigma_f$ is also definable because the equality $\Sigma_f=f(\Lambda_f)$ holds.
	
	Assume for contradiction that $\dim \Sigma_f=n$.
	The definable set $\Sigma_f$ contains a nonempty open box $U$.
	We can take a definable map $g:U \to \Sigma_f$ such that $f \circ g$ is an identity map on $U$.
	We may assume that $g$ is of class $\mathcal C^1$ by \cite[Lemma 3.14]{F} by shrinking $U$ if necessary.
	By differentiation, the matrix$Df(g(x)) \cdot Dg(x)$ is the identity matrix of size $n$.
	It implies that $Df(g(x))$ has rank at least $n$, which contradicts the definition of $\Gamma_f$. 
\end{proof}

\begin{lemma}\label{lem:Morse_basic}
	Consider a d-minimal expansion of an ordered field $\mathcal F=(F,<,+,\cdot, 0,1,\ldots)$.
	Let $\pi$ be the coordinate projection of $F^n$ onto the first $d$ coordinates.
	Let $U$ be a $\mathcal D^2$ multi-valued graph with respect to $\pi$ and $f:U \to F$ be a $\mathcal D^2$ map.
	We can find $a_1,\ldots, a_d \in F$ such that the definable function $\Phi:U \to F$ given by  $\Phi(x_1,\ldots,x_n) =f(x_1,\ldots,x_n)-\sum_{i=1}^d a_ix_i$ is a Morse function and $|a_1|,\ldots,|a_d|$ are sufficiently small.
\end{lemma}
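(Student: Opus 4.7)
The idea is the standard Morse-theoretic trick of applying a Sard-type theorem to the ``gradient'' of $f$. Because $U$ is a $\mathcal D^2$ multi-valued graph with respect to $\pi$, around every point of $U$ the restriction of $\pi$ is a $\mathcal D^2$ diffeomorphism onto an open box in $F^d$; consequently the ambient coordinate functions $x_1,\ldots,x_d$ serve as local coordinates on $U$ in a neighborhood of each point, and the partial derivatives $\partial f/\partial x_j$ ($1\le j\le d$) are well-defined definable $\mathcal D^1$ functions on $U$ (independent of the choice of local graph chart, since $x_1,\ldots,x_d$ are global ambient functions).

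My first step is to introduce the definable $\mathcal D^1$ gradient-type map
\[
g:U\longrightarrow F^d,\qquad g(x)=\Bigl(\tfrac{\partial f}{\partial x_1}(x),\ldots,\tfrac{\partial f}{\partial x_d}(x)\Bigr).
\]
For any $a=(a_1,\ldots,a_d)\in F^d$, set $\Phi_a(x):=f(x)-\sum_{j=1}^d a_jx_j$; its critical point set is exactly the fibre $g^{-1}(a)$. At such a critical point $x_0$ the Hessian of $\Phi_a$ in the coordinates $x_1,\ldots,x_d$ equals the Hessian of $f$ (the linear correction contributes nothing to second derivatives), which is nothing but the Jacobian $Dg(x_0)$. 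Hence $a$ is a regular value of $g$ if and only if every critical point of $\Phi_a$ is non-degenerate, i.e.\ $\Phi_a$ is a Morse function.

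The second step is to apply Proposition \ref{prop:definable_Sard} to the $\mathcal D^1$ map $g$: the set $\Sigma_g\subseteq F^d$ of its critical values is definable of dimension strictly less than $d$, so by Definition \ref{def:dim} it contains no nonempty open box. In particular, for every $\varepsilon>0$ in $F$ the open box $(-\varepsilon,\varepsilon)^d$ contains some $a=(a_1,\ldots,a_d)\notin\Sigma_g$, and choosing $\varepsilon$ as small as required yields the desired constants with $|a_1|,\ldots,|a_d|$ arbitrarily small.

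I do not expect any serious obstacle. The two points that require a careful sentence are (i) the well-definedness and definable $\mathcal D^1$-smoothness of $g$ on the whole of $U$, which rests on the fact that the ambient coordinates $x_1,\ldots,x_d$ give consistent local coordinates at every point of the multi-valued graph, and (ii) the identification of the Hessian of $\Phi_a$ at a point of $g^{-1}(a)$ with $Dg$, which is immediate. Everything else is a direct consequence of the definable Sard theorem together with the fact that a definable subset of $F^d$ of dimension $<d$ must have empty interior.
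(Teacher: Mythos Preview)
Your proposal is correct and follows essentially the same line as the paper: the map you call $g$ is exactly the map $H$ in the paper's proof, and the argument---identifying the Hessian of $\Phi_a$ with $Dg$ and then invoking Proposition~\ref{prop:definable_Sard} to find a regular value $a$ in any prescribed open box---is the same. If anything, your write-up spells out the equivalence ``$a$ is a regular value of $g$ $\Leftrightarrow$ $\Phi_a$ is Morse'' more carefully than the paper, which simply declares this step ``easy to check.''
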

\begin{proof}
	Observe that the global coordinate functions $x_1,\ldots,x_d$ in $F^n$ is a local coordinate function of $U$ by the definition of multi-valued graphs as proven in the proof of Proposition \ref{prop:definable_Sard}.
	Consider the map $H:U \to F^d$ given by $H(x)=(\frac{\partial f}{\partial x_1}(x), \ldots, \frac{\partial f}{\partial x_d}(x))$.
	Observe that $p_0 \in U$ is a critical point of $H$ if and only if $\det(H_f)(p_0)=\det(H_{\Phi})(p_0)=0$, where $H_f$ and $H_{\Phi}$ are Hessians of $f$ and $\Phi$, respectively.
	Let $(a_1,\ldots, a_d)$ be the point in $F^d \setminus \Sigma_H$, where $\Sigma_H$ denotes the set of critical values of $H$.
	We can choose such $(a_1,\ldots, a_d)$ so that $|a_1|,\ldots,|a_d|$ are sufficiently small because $\Sigma_H$ has an empty interior by Proposition \ref{prop:definable_Sard}.
	It is easy to check that $\Phi$ is a Morse function.
	We omit the details.
\end{proof}

\begin{lemma}\label{lem:zeroset}
	Consider a d-minimal expansion of an ordered field $\mathcal F=(F,<,+,\cdot, 0,1,\ldots)$.
	Let $M$ be a $\mathcal D^r$ submanifold of $F^m$ with $0 \leq r < \infty$. 
	Given a definable closed subset $X$ of $M$,
	there exists a $D^r$ function $f:M \rightarrow \mathbb R$ whose zero set is $X$.
\end{lemma}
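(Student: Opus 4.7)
The plan is to construct $f$ locally on charts of $M$ and patch via a definable $\mathcal D^r$ partition of unity. By Lemma 2.6, $M$ is covered by finitely many $\mathcal D^r$ multi-valued graphs $U_1,\ldots,U_k$, each identified through a coordinate projection $\pi_i$ with an open definable $V_i\subseteq F^d$ (where $d=\dim M$); under this identification $X\cap U_i$ becomes a closed definable $Y_i\subseteq V_i$. The problem thereby reduces to the chart-level task: given a closed definable $Y$ in an open definable $V\subseteq F^d$, produce a nonnegative $\mathcal D^r$ function $h:V\to F$ with $h^{-1}(0)=Y$.

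For the chart-level step, the definable distance function $\delta(x)=\mydist(x,Y)$ is continuous, $1$-Lipschitz, and vanishes exactly on $Y$, so it is a ready $\mathcal D^0$ solution. To promote it to $\mathcal D^r$, one applies the definable $\mathcal D^r$-approximation machinery in d-minimal structures (along the lines of \cite{F}) to $\delta^{-(r+1)}$ on $V\setminus Y$: choose a positive $\mathcal D^r$ function $g$ approximating $\delta^{-(r+1)}$ closely enough in a $\mathcal D^r$ sense that $g(x)\in\bigl(\tfrac12\delta(x)^{-(r+1)},\tfrac32\delta(x)^{-(r+1)}\bigr)$ with derivatives controlled of the natural order $\delta^{-(r+1+j)}$ for $j\leq r$. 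Define $h=1/g$ on $V\setminus Y$ and $h\equiv 0$ on $Y$. Then $0<h\leq 2\delta^{r+1}$ off $Y$, and the derivative estimates imply that $h$ and all its derivatives up to order $r$ tend to $0$ as $x\to Y$, so $h$ extends to a $\mathcal D^r$ function on $V$ whose zero set is exactly $Y$.

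Globally, one patches with a definable $\mathcal D^r$ partition of unity $\{\rho_i\}_{i=1}^k$ subordinate to $\{U_i\}$ and sets $f=\sum_{i=1}^k \rho_i\cdot(h_i\circ\pi_i)$, each summand extended by $0$ outside $\mysupp(\rho_i)$. Each summand is nonnegative and $\mathcal D^r$, so is $f$. For $x\in X$ every contributing summand vanishes, and for $x\notin X$ the partition property forces some $\rho_i(x)>0$ carrying a strictly positive $h_i(\pi_i(x))$. Hence $f^{-1}(0)=X$.

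The main obstacle is the chart-level $\mathcal D^r$-upgrade. The distance function $\delta$ hands over the correct zero set for free but is only Lipschitz; any zero-set-preserving smoothing must deliver controlled derivatives up to order $r$ uniformly as one approaches $Y$, which is delicate because $\delta^{-(r+1)}$ and its derivatives blow up near $Y$. This is where the d-minimal $\mathcal D^r$-approximation technology is indispensable. A secondary but also nontrivial ingredient is the existence of a definable $\mathcal D^r$ partition of unity on the $\mathcal D^r$ submanifold $M$; this follows from the general framework developed in the authors' prior work but should be invoked explicitly.
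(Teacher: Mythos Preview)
The paper's proof is a two-line reduction to the ambient zero-set property: take the closure $\mycl(X)$ of $X$ in $F^m$, invoke \cite{MT} to obtain a $\mathcal D^r$ function $G:F^m\to F$ with $G^{-1}(0)=\mycl(X)$, and restrict $G$ to $M$. Since $X$ is closed in $M$ one has $\mycl(X)\cap M=X$, so $f=G|_M$ is the desired function. No charts, no smoothing, no partition of unity.

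Your approach is not merely a different route; it has a genuine gap in the chart-level setup. A $\mathcal D^r$ multi-valued graph $U_i$ is only \emph{locally} the graph of a map over $\pi_i(U_i)$; the restriction $\pi_i|_{U_i}$ is a local diffeomorphism but need not be injective. Hence $X\cap U_i$ does not in general correspond to a single closed $Y_i\subseteq V_i$, and if you force $Y_i=\pi_i(X\cap U_i)$ your final positivity claim breaks: a point $x\notin X$ with $\rho_i(x)>0$ may satisfy $\pi_i(x)\in Y_i$ because some \emph{other} sheet of $U_i$ over $\pi_i(x)$ meets $X$, so $h_i(\pi_i(x))=0$ and the sum $f$ can vanish off $X$. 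You would need to refine to an atlas of honest graph charts, which is possible but not what Lemma~2.6 provides.

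Separately, the ``$\mathcal D^r$ upgrade'' of the distance function is only sketched, and what you are asking for --- a definable $\mathcal D^r$ approximation of $\delta^{-(r+1)}$ on $V\setminus Y$ with derivative growth of order $\delta^{-(r+1+j)}$ --- is essentially the hard analytic content of the zero-set theorem in \cite{MT}. So even after fixing the chart issue, your chart-level step is not more elementary than the result you are trying to prove; it is the same result on an open subset of $F^d$. The paper simply cites that result once, in the ambient $F^m$, and is done.
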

\begin{proof}
	Consider the closure $\mycl(X)$ of $X$ in $F^m$.
	There exists a $\mathcal D^r$ function $G: F^m \rightarrow  F$ with $G^{-1}(0)=\mycl(X)$ by \cite{MT}.
	The restriction of $G$ to $M$ satisfies the requirement.
\end{proof}

\begin{lemma}\label{lem:sep}
	Consider a d-minimal expansion of an ordered field $\mathcal F=(F,<,+,\cdot, 0,1,\ldots)$.
	Let $M$ be a definable $\mathcal D^r$ submanifold of $F^m$ with $0 \leq r < \infty$. 
	Let $X$ and $Y$ be closed definable subsets of $M$ with $X \cap Y = \emptyset$.
	Then, there exists a $\mathcal D^r$ function $f:M \rightarrow [0,1]$ with $f^{-1}(0)=X$ and $f^{-1}(1)=Y$.
\end{lemma}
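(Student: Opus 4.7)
The plan is to carry out a standard Urysohn-style construction using quotients, once Lemma \ref{lem:zeroset} is applied to $X$ and $Y$ separately.

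First, I would apply Lemma \ref{lem:zeroset} twice to produce $\mathcal D^r$ functions $g,h : M \to F$ with $g^{-1}(0) = X$ and $h^{-1}(0) = Y$. Replacing $g$ and $h$ by $g^2$ and $h^2$, which are still $\mathcal D^r$ and preserve zero sets, I may assume $g \geq 0$ and $h \geq 0$ on $M$. Since $X \cap Y = \emptyset$, at every point $x \in M$ at least one of $g(x), h(x)$ is strictly positive, so $g + h > 0$ everywhere on $M$.

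Next, I would define
\[
f(x) = \frac{g(x)}{g(x) + h(x)}.
\]
Because $g + h$ is a strictly positive $\mathcal D^r$ function, division by it is $\mathcal D^r$, so $f$ is a $\mathcal D^r$ function on $M$. From $0 \leq g \leq g+h$ we get $f(M) \subseteq [0,1]$. Moreover, $f(x) = 0$ iff $g(x) = 0$ iff $x \in X$, and $f(x) = 1$ iff $h(x) = 0$ iff $x \in Y$, so $f^{-1}(0) = X$ and $f^{-1}(1) = Y$ as required.

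There is no serious obstacle here: the argument is a direct transcription of the classical smooth Urysohn lemma into the definable category. The only subtlety is ensuring the denominator is genuinely nowhere-zero, which is guaranteed by the disjointness hypothesis $X \cap Y = \emptyset$ after the initial reduction to nonnegative functions. Everything else — the $\mathcal D^r$ regularity of sums, squares, and quotients by nonvanishing $\mathcal D^r$ functions, and the fact that zero sets are preserved under squaring — is routine, so I would not dwell on verifications.
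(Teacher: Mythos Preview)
Your argument is correct and matches the paper's own proof essentially verbatim: the paper applies Lemma~\ref{lem:zeroset} to get $g,h$ and then sets $f(x)=\dfrac{g(x)^2}{g(x)^2+h(x)^2}$, which is exactly your construction after the squaring step.
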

\begin{proof}
	There exist $\mathcal D^r$ functions $g,h:M \rightarrow F$ with $g^{-1}(0)=X$ and $h^{-1}(0)=Y$ by Lemma \ref{lem:zeroset}. The function $f:M \rightarrow [0,1]$ defined by $f(x)=\frac{g(x)^2}{g(x)^2+h(x)^2}$ satisfies the requirement.
\end{proof}

\begin{lemma}\label{lem:middle}
	Consider a d-minimal expansion of an ordered field $\mathcal F=(F,<,+,\cdot, 0,1,\ldots)$.
	Let $M$ be a $\mathcal D^r$ submanifold with $0 \leq r < \infty$. 
	Let $C$ and $U$ be closed and open definable subsets of $M$, respectively. 
	Assume that $C$ is contained in $U$.
	Then, there exists an open definable subset $V$ of $M$ with $C \subseteq V \subseteq \mycl(V) \subseteq U$.
\end{lemma}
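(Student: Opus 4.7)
The plan is to deduce this immediately from the separation lemma (Lemma \ref{lem:sep}). Since $M$ is a $\mathcal{D}^r$ submanifold of $F^m$ and $C$ is closed in $M$ while $U$ is open in $M$ with $C \subseteq U$, the set $Y := M \setminus U$ is a definable closed subset of $M$ disjoint from $X := C$. Applying Lemma \ref{lem:sep} to the disjoint pair $(X, Y)$ yields a $\mathcal{D}^r$ function $f : M \to [0,1]$ with $f^{-1}(0) = C$ and $f^{-1}(1) = M \setminus U$.

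Next I would set $V := \{x \in M \mid f(x) < 1/2\}$. This set is definable, and it is open in $M$ by the continuity of $f$. The inclusion $C \subseteq V$ is immediate from $f^{-1}(0) = C$. For the closure bound, continuity of $f$ gives $\mycl(V) \subseteq \{x \in M \mid f(x) \leq 1/2\}$. Since every point with $f(x) \leq 1/2$ satisfies $f(x) \neq 1$, it avoids $M \setminus U$, so $\mycl(V) \subseteq U$. This establishes the chain $C \subseteq V \subseteq \mycl(V) \subseteq U$.

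There is essentially no obstacle here: the content of the lemma has already been packed into Lemma \ref{lem:sep}, which in turn rests on the zero-set construction of Lemma \ref{lem:zeroset} coming from \cite{MT}. The only subtlety worth a remark is that the closure $\mycl(V)$ is taken in $M$, but this causes no issue since $f$ is defined on $M$ and the sublevel set $\{f \leq 1/2\}$ is closed in $M$ by continuity. Thus the proof reduces to a one-line application of Lemma \ref{lem:sep} followed by choosing an intermediate sublevel set.
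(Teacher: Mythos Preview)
Your argument is correct and matches the paper's proof essentially line for line: the paper also applies Lemma~\ref{lem:sep} to $C$ and $M\setminus U$ to obtain a separating function $h$, and then takes $V=\{x\in M: h(x)<\tfrac12\}$. Your additional verification that $\mycl(V)\subseteq\{f\le\tfrac12\}\subseteq U$ simply spells out what the paper leaves implicit.
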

\begin{proof}
	There is a definable continuous function $h:M \rightarrow [0,1]$ with $h^{-1}(0)=C$ and $h^{-1}(1) = M \setminus U$ by Lemma \ref{lem:sep}.
	The set $V=\{x \in M\,;\, h(x)<\frac{1}{2}\}$ satisfies the requirement.
\end{proof}

\begin{lemma}[Fine definable open covering]\label{lem:covering}
	Consider a d-minimal expansion of an ordered field $\mathcal F=(F,<,+,\cdot, 0,1,\ldots)$.
	Let $M$ be a $\mathcal D^r$ submanifold with $0 \leq r < \infty$. 
	Let $\{U_i\}_{i=1}^q$ be a finite definable open covering of $M$.
	For each $1 \leq i \leq q$, 
	there exists a definable open subset $V_i$ of $M$ satisfying the following conditions:
	\begin{itemize}
		\item  the closure $\mycl(V_i)$ in $M$ is contained in $U_i$ for each $1 \leq i \leq q$, and 
		\item  the collection $\{V_i\}_{i=1}^q$ is again a finite definable open covering of $M$. 
	\end{itemize}
\end{lemma}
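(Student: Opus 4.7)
The plan is to construct the $V_i$ one at a time by induction on $i$, using Lemma \ref{lem:middle} (the single closed-in-open shrinking result) at each step. This is the standard shrinking-lemma argument for finite covers, and the o-minimal/topological analogue should translate without difficulty once we verify that definability is preserved at each stage.

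The induction hypothesis at stage $k$ (for $1 \le k \le q+1$) will be: there exist definable open subsets $V_1, \ldots, V_{k-1}$ of $M$ with $\mycl(V_j) \subseteq U_j$ for every $j < k$, such that the family
\[
V_1, \ldots, V_{k-1}, U_k, U_{k+1}, \ldots, U_q
\]
is still a finite definable open covering of $M$. The base case $k=1$ is just the hypothesis of the lemma. For the inductive step, set
\[
C_k := M \setminus \bigl( V_1 \cup \cdots \cup V_{k-1} \cup U_{k+1} \cup \cdots \cup U_q \bigr),
\]
which is a definable closed subset of $M$, and by the covering property at stage $k$ it satisfies $C_k \subseteq U_k$. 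Applying Lemma \ref{lem:middle} to the pair $(C_k, U_k)$ yields a definable open set $V_k$ with $C_k \subseteq V_k \subseteq \mycl(V_k) \subseteq U_k$. A short verification shows that $V_1, \ldots, V_k, U_{k+1}, \ldots, U_q$ still covers $M$: any point not lying in any of $V_1, \ldots, V_{k-1}, U_{k+1}, \ldots, U_q$ must belong to $C_k$, hence to $V_k$. This advances the induction, and taking $k = q+1$ produces the desired cover $\{V_i\}_{i=1}^q$.

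There is essentially no genuine obstacle here, since Lemma \ref{lem:middle} already carries the burden of the d-minimal separation argument (via the function-theoretic Lemmas \ref{lem:zeroset} and \ref{lem:sep}). The only mild points to check are that $C_k$ is definable (which is immediate, as a finite Boolean combination of definable sets) and that definable closures of definable sets are definable (standard in this setting), so that the hypotheses of Lemma \ref{lem:middle} are met at every step. Finiteness of the cover is what makes the induction terminate after $q$ applications.
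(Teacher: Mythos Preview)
Your argument is correct and essentially identical to the paper's own proof: the paper also constructs the $V_i$ inductively, defines $C_k = M \setminus \bigl(\bigcup_{i<k} V_i \cup \bigcup_{i>k} U_i\bigr)$, and applies Lemma~\ref{lem:middle} to obtain $V_k$ with $C_k \subseteq V_k \subseteq \mycl(V_k) \subseteq U_k$. The only difference is cosmetic---you spell out the covering verification and the definability checks slightly more explicitly than the paper does.
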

\begin{proof}
	We inductively construct $V_i$ so that $\mycl(V_i) \subset U_i$ and $\{V_i\}_{i=1}^{k-1} \cup \{U_i\}_{i=k}^q$ is a finite definable open covering of $M$.
	We fix a positive integer $k$ with $k \leq q$.
	Set $C_k = M \setminus (\bigcup_{i=1}^{k-1} V_i \cup \bigcup_{i=k+1}^{q} U_i)$.
	The set $C_k$ is a definable closed subset of $M$ contained in $U_k$.
	There exists a definable open subset $V_k$ of $M$ with $C_k \subseteq V_k \subseteq \mycl(V_k) \subseteq U_k$ by Lemma \ref{lem:middle}.
	It is obvious that $\{V_i\}_{i=1}^{k} \cup \{U_i\}_{i=k+1}^q$ is a finite definable open covering of $M$.
\end{proof}


\begin{definition}
Let $\mathcal F=(F,<,+,\cdot, 0,1,\ldots)$ be a definably complete expansion of an ordered field.
Let $M$ be a $\mathcal D^r$ submanifold of $F^n$ and $\mathcal D^r(M)$ be the set of $\mathcal D^r$ functions.
The space $\mathcal D^r(M)$ equips the topology defined in \cite{E}. 
\end{definition}

\begin{theorem}\label{thm:Morse}
	Consider a d-minimal expansion of an ordered field $\mathcal F=(F,<,+,\cdot, 0,1,\ldots)$.
	Let $M$ be a definably compact $\mathcal D^2$ submanifold of $F^n$.
	The set of all definable Morse functions on $M$ is open and dense in $\mathcal D^2(M)$.
\end{theorem}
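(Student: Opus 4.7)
The plan is to prove openness and density separately, both by reducing to the local setting of a $\mathcal{D}^2$ multi-valued graph and then gluing via the covering/separation machinery already established (Lemmas 2.6, 2.9, 2.11).

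For openness, I would fix a Morse function $f \in \mathcal{D}^2(M)$, cover $M$ by finitely many $\mathcal{D}^2$ multi-valued graphs $U_1,\dots,U_k$ (Lemma 2.6), and refine to a covering $\{V_i\}$ with $\mycl(V_i)\subseteq U_i$ (Lemma 2.11). On each $U_i$, local coordinates are just the restrictions of the first $d$ coordinate functions of $F^n$, so the quantity $h_f^i(x)=\|\nabla f(x)\|^2+(\det H_f(x))^2$ is a well-defined definable continuous positive function on $U_i$. Since $\mycl(V_i)$ is a definable closed subset of the definably compact $M$, and definable compactness is inherited by closed subsets, the definable filtered-collection formulation forces $h_f^i$ to attain a strictly positive infimum $\varepsilon_i$ on $\mycl(V_i)$. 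Taking $\varepsilon=\tfrac12\min_i\varepsilon_i$ and using that $h_g^i$ depends continuously (in the $C^2$ topology of \cite{E}) on $g$, any $g$ sufficiently $C^2$-close to $f$ satisfies $h_g^i>\varepsilon_i/2$ on each $\mycl(V_i)$, hence is Morse on $\bigcup_i V_i=M$.

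For density, I would fix $f\in \mathcal{D}^2(M)$ and a basic neighborhood of $f$ in the $C^2$ topology, pick the covers $\{U_i\}$ and $\{V_i\}$ as above, and inductively perturb. Using Lemma 2.9 I produce bump functions $\lambda_i\colon M\to[0,1]$ definable and of class $\mathcal{D}^2$ with $\lambda_i\equiv 1$ on $\mycl(V_i)$ and $\mysupp(\lambda_i)\subseteq U_i$. Set $f_0=f$, and at step $i$ apply Lemma \ref{lem:Morse_basic} to $f_{i-1}|_{U_i}$ to obtain small $a_1,\dots,a_d\in F$ with $f_{i-1}|_{U_i}-\sum_{j=1}^d a_j x_j$ Morse on $U_i$. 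Then set
\[
f_i = f_{i-1} - \sum_{j=1}^d a_j\,\lambda_i\, x_j\quad\text{on }M.
\]
On $\mycl(V_i)$ we have $\lambda_i\equiv 1$, so $f_i$ coincides with the local Morse perturbation there. The set of admissible $(a_1,\dots,a_d)$ coming from Proposition 2.7 (definable Sard) is the complement of the set of critical values of the local gradient map $H$, which by Proposition 2.7 has empty interior, so admissible tuples are dense near $0$; hence the $a_j$'s may be chosen with $|a_j|$ arbitrarily small. After $k$ steps the function $f_k$ is Morse on $\bigcup_i\mycl(V_i)=M$, and since at each step the perturbation has arbitrarily small $C^2$-norm, the total perturbation $f_k-f$ lies in the prescribed neighborhood.

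The main obstacle is ensuring that the perturbation at step $i$ does not destroy the Morse property already achieved on $K_{i-1}:=\mycl(V_1)\cup\dots\cup\mycl(V_{i-1})$. This is handled by running the openness argument above applied to $f_{i-1}$ restricted to the definably compact set $K_{i-1}$: there is a $C^2$-neighborhood of $f_{i-1}$ inside which every definable $\mathcal{D}^2$ function is still Morse on $K_{i-1}$. Since the Morse-producing set of tuples from Lemma \ref{lem:Morse_basic} is dense near $0$ while the $C^2$-closeness condition is open around $0$, the two constraints can be satisfied simultaneously, and the induction closes. A small subtlety to verify is that the $C^2$-topology on $\mathcal{D}^2(M)$ from \cite{E} is compatible with the chartwise $C^2$-estimates used above; this should follow directly from the definition of that topology.
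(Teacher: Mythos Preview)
Your proposal is correct and follows essentially the same route as the paper: cover $M$ by finitely many $\mathcal D^2$ multi-valued graphs via Lemma~2.6, refine the cover (Lemma~\ref{lem:covering}), use separation (Lemma~\ref{lem:sep}) to get bump functions, prove openness by bounding a chartwise ``gradient-plus-Hessian-determinant'' quantity from below on the definably compact pieces $\mycl(V_i)$, and prove density by an inductive perturbation $f_i=f_{i-1}-\lambda_i\sum_j a_jx_j$ using Lemma~\ref{lem:Morse_basic}, invoking the already-established openness at each step so that the new perturbation does not spoil the Morse property on $\mycl(V_1)\cup\dots\cup\mycl(V_{i-1})$. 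The only cosmetic differences are that the paper uses $\sum_j|h^i_j|+|H_i|$ where you use $\|\nabla f\|^2+(\det H_f)^2$, and the paper packages the $C^2$-neighborhoods explicitly as $\mathcal V_{h,\varepsilon}$ with careful $\varepsilon/k$ bookkeeping rather than appealing abstractly to the topology of \cite{E}.
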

\begin{proof}
	We first prepare several notations and define several sets and maps for later use.
	Set $d=\dim M$.
	The $\mathcal D^2$ submanifold  $M$ is covered by finitely many $\mathcal D^2$ multi-valued graphs $U_1,\ldots, U_k$ by Lemma 2.6.
	Let $U_i$ be a $\mathcal D^2$ multi-valued graph with respect to a coordinate projection $\pi_i:F^n \to F^d$ for $1 \leq i \leq k$.
	Let $\{V_i\}_{i=1}^k$ be a fine definable open covering of $\{U_i\}_{i=1}^k$  given in Lemma 2.12.
	Set $C_i=\mycl(V_i)$.
	Observe that $C_i \subseteq U_i$ and $M=\bigcup_{i=1}^k C_i$.
	The definable set $C_i$ is a closed subset of $M$.
	It deduces that $C_i$ is definably compact.
	By Lemma \ref{lem:sep}, we can take a definable function $\lambda_i:M \to [0,1]$ such that $\lambda_i^{-1}(0)=M \setminus U_i$ and $\lambda_i^{-1}(1)=C_i$.
	
	Let $D_i$ be the $\mathcal D^1$ vector field such that $D_i(x)$ is the projection image of $\partial/\partial x_i$ onto the tangent space $T_xM$ of $M$ at $x \in M$.
	We can find a subset $I_i$ of $\{1,\ldots,n\}$ of cardinality $d$ such that $\pi_i$ is the projection onto the $d$ coordinates $(x_j\;|\; j \in I_i)$.
	The $j$-th smallest element in $I_i$ is denoted by $\sigma_i(j)$. 
	By the definition of multi-valued graphs, $T_xM$ is spanned by $(D_j(x)\;|\; j\in I_i)$ for every $x \in U_i$ and for every $1 \leq i \leq k$.
	
	Let $\mathcal U$ be the set of all all definable Morse functions on $M$.
	We first show that $\mathcal U$ is open.
	We prove a stronger claim for later use.
	For every nonempty subset $I$ of $\{1,\ldots, k\}$, we set $$\mathcal U_I:=\{f \in \mathcal D^2(M)\;|\; f \text{ has no degenerate critical points on }C_i \text{ for each } i \in I\}.$$
	We prove that $\mathcal U_I$ is open.
	Take an arbitrary definable Morse function $h:M \to F$.
	We set $h_j^i=D_{\sigma_i(j)}h$ for $1 \leq j \leq d$ and $i \in I$.
	We set $H_i=\det (D_{\sigma_i(j_1)}D_{\sigma_i(j_2)}h)_{1 \leq j_1,j_2 \leq d}$.
	They are definable continuous functions defined on $M$.
	Since $h$ has no degenerate critical points on $C_i$ for $i \in I$ and the coordinates $x_{\sigma_i(1)}, \ldots, x_{\sigma_i(d)}$ give a local coordinate of $M$ at $x \in U_i$, the function $G_i:=\sum_{j=1}^d |h_j^i|+|H_i|$ is positive on $U_i$ for $i \in I$.
	In particular, we can find a positive $K_i \in F$ such that $G_i>K_i$ on the definably compact definable set $C_i$ for $i \in I$.
	We can take $L_i>0$ so that $|D_{\sigma_i(j_1)}D_{\sigma_i(j_2)}h|<L_i$ on $C_i$.
	Take a sufficiently small $\varepsilon>0$ so that $d!((L_i+\varepsilon)^d-L_i^d)+d\varepsilon<K_i$ for every $i \in I$.
	Consider the open set
	\begin{align*}
	\mathcal V_{h,\varepsilon}&=\{g \in \mathcal D^2(M)\;|\;|g-h|<\varepsilon, |D_j(g-h)|<\varepsilon\ (1 \leq j \leq d),\\
	&\quad  |D_{j_1}D_{j_2}(g-h)|<\varepsilon\ (1 \leq j_1,j_2 \leq d)\}
	\end{align*}
	in $\mathcal D^2(M)$.
	We can verify that $\sum_{j=1}^d |D_{\sigma_i(j)}g|+|\det (D_{\sigma_i(j_1)}D_{\sigma_i(j_2)}g)_{1 \leq j_1,j_2 \leq d}|>0$ on $C_i$ for every $i \in I$ and $g \in \mathcal V_{h,\varepsilon}$.
	It deduces that $\mathcal V_{h,\varepsilon} \subseteq \mathcal U_I$ and $\mathcal U_I$ is open.
	
	We next show that $\mathcal U$ is dense in $\mathcal D^2(M)$.
	We first take arbitrary $h \in \mathcal D^2(M)$.
	We define an open set $\mathcal V_{h,\varepsilon}$ for positive $\varepsilon \in F$ in the same manner as above.
	Since $M$ is definably compact, every positive definable continuous function is bounded from below by a positive constant.
	This deduces that $\{\mathcal V_{h,\varepsilon}\}_{\varepsilon>0}$ is a basis of open neighborhoods of $h$ in $\mathcal D^2(M)$. 
	
	Fix an arbitrary positive $\varepsilon \in F$ and set $\varepsilon'=\varepsilon/k$.
	We have only to construct a function $g \in \mathcal U \cap \mathcal V_{h,\varepsilon}$ so as to show that $\mathcal U$ is dense in $\mathcal D^2(M)$.
	Set $\mathcal U_i:=\mathcal U_{\{1,\ldots,i\}}$ for $1 \leq i \leq k$ for simplicity.
	We construct $g_i \in \mathcal U_i \cap \mathcal V_{h,i\varepsilon'}$.
	It is obvious that $g:=g_k \in \mathcal U \cap \mathcal V_{h,\varepsilon}$.
	
	We construct $g_i$ by induction on $i$.
	We may assume that $\pi_i$ is the projection onto the first $d$ coordinates by permuting the coordinates if necessary.
	We first consider the case in which $i=1$.
	We can find $a_1,\ldots, a_d$ such that $|a_j|<\varepsilon'$ for $1 \leq j \leq d$ and $g_1(x)=h(x)+\sum_{i=1}^da_ix_i$ is a Morse function by Lemma \ref{lem:Morse_basic}.
	It is obvious that $g_1 \in \mathcal U_1 \cap \mathcal V_{h,\varepsilon'}$.

	We next consider the case in which $i>1$.
	We can find $g_{i-1} \in  \mathcal U_{i-1} \cap \mathcal V_{h,(i-1)\varepsilon'}$ by induction hypothesis.
	We construct $g_i \in \mathcal U_i \cap \mathcal V_{g_{i-1},\varepsilon'}$.
	Such a $g_i$ obviously belongs to $\mathcal U_i \cap \mathcal V_{h,i\varepsilon'}$.
	We have already shown that $\mathcal U_{i-1}$ is open.
	Therefore, we can find $\delta>0$ such that $\mathcal V_{g_{i-1},\delta} \subseteq \mathcal U_{i-1} \cap  \mathcal V_{g_{i-1},\varepsilon'}$ because $\{\mathcal V_{g_{i-1},\varepsilon''}\}_{\varepsilon''>0}$ is a basis of open neighborhoods of $g_{i-1}$ in $\mathcal D^2(M)$. 
	
	Set $g_i:=g_{i-1}+\lambda_i \cdot (\sum_{i=1}^da_ix_i)$ for $a_1,\ldots,a_d \in F$.
	We want to choose $a_1,\ldots,a_d \in F$ satisfying the following conditions:
	\begin{enumerate}
		\item[(1)] $g'_i:=g_{i-1}+\sum_{i=1}^da_ix_i$ has no degenerate critical points in $U_i$.
		\item[(2)] $g_i \in \mathcal V_{g_{i-1},\delta}$.
	\end{enumerate}
	We check that $g_i$ belong to $U_i \cap \mathcal V_{g_{i-1},\varepsilon'}$ when $a_1,\ldots,a_d $ satisfy the above conditions (1) and (2).
	It is obvious that $g_i \in V_{g_{i-1},\varepsilon'}$ by the inclusion $\mathcal V_{g_{i-1},\delta} \subseteq  \mathcal V_{g_{i-1},\varepsilon'}$.
	The inclusion $\mathcal V_{g_{i-1},\delta} \subseteq \mathcal U_{i-1}$ implies that $g_i$ has no degenerate critical points on $C_j$ for $1 \leq j \leq i-1$.
	Since $\lambda_i$ is identically one on $C_i$, we have $g_i=g'_i$ on $C_i$.
	Condition (1) implies that $g_i$ has no degenerate critical points in $C_i$.
	We have shown that $g_i$ has no degenerate critical points on $C_j$ for $1 \leq j \leq i$, and this means $g_i \in \mathcal U_i$.
	
	The remaining task is to find $a_1,\ldots,a_d \in F$ so that conditions (1) and (2) are satisfied.
	The following inequalities are satisfied:
	\begin{align*}
		|g_i-g_{i-1}| & \leq  \sum_{l=1}^d|a_l||\lambda_ix_l|  < K \cdot \sum_{j=1}^k|a_j|\\
		|D_j(g_i-g_{i-1}) |& \leq  \sum_{l=1}^d|a_l||D_j(\lambda_ix_l)| < K \cdot\sum_{j=1}^k|a_j|\\
		|D_{j_1}D_{j_2}(g_i-g_{i-1}) |& \leq  \sum_{l=1}^d|a_l||D_{j_1}D_{j_2}(\lambda_ix_l)| < K \cdot\sum_{j=1}^k|a_j|
 	\end{align*}
	Finitely many definable continuous functions $|\lambda_ix_l|$, $|D_j(\lambda_ix_l)|$ and $|D_{j_1}D_{j_2}(\lambda_ix_l)| $ defined on $M$ appear in the above calculation.
	Since $M$ is definably compact, we can find $0<K \in F$ such that these functions are bounded above by $K$ in $M$.
	We used this fact in the calculation.
	We can find $(a_1,\ldots, a_d)$ so that $K \cdot\sum_{j=1}^k|a_j|<\delta$ and $g'_i$ has no degenerate critical points in $U_i$ by Lemma \ref{lem:Morse_basic}.
	This $(a_1,\ldots, a_d)$ satisfies conditions (1) and (2).
\end{proof}

\end{document}